\documentclass[12pt]{article}

\usepackage[utf8]{inputenc}

\usepackage[a4paper,nomarginpar]{geometry}
\geometry{
 lmargin=25mm,
 rmargin=25mm,
 tmargin=25mm,
 bmargin=25mm}

\usepackage[english]{babel}

\usepackage{amsmath,amsfonts,amssymb,amsthm,amscd,mathrsfs}
\usepackage[pdftex]{graphicx}

\AtBeginDocument{% Remove the field MR from references
   \def\MR#1{}
}

% Some commands from the authors

\theoremstyle{definition}
\newtheorem{theorem}{Theorem}

\newtheorem{proposition}[theorem]{Proposition}
\newtheorem{corollary}[theorem]{Corollary}

\theoremstyle{definition}
\newtheorem{definition}[theorem]{Definition}
\newtheorem{example}[theorem]{Example}

\theoremstyle{remark}

%\theoremstyle{acknowledgement} %\newtheorem{acknowledgement}[theorem]{Acknowledgement}

% Algunos comandos

\newcommand{\N}{\mathbb{N}} % Conjunto naturales:     \N
 % Conjunto enteros:       \Z
\newcommand{\R}{\mathbb{R}} % Conjunto reales:        \R
\newcommand{\C}{\mathbb{C}} % Conjunto complejos:     \C
 % Disco unidad:           \D
 % Circ. unidad:           \T
 % Cuerpo		          \K

% Absolute value notation

% Norm notation
\newcommand{\norm}[1]{\lVert#1\rVert}

% Others
\newcommand{\orb}{\operatorname{Orb}}

\newcommand{\ind}{\operatorname{ind}}
\newcommand{\proj}{\operatorname{proj}}

%%%%%%%%%%%%%%%%%%%%%%%%%%%%%%%%%%%%%%%%%%%%%%%%%%%%%%%%%%%%%%%%%%%%%%%%% %%%%%%%%%%%%%%%%%%%%%%%%%%%%%%%%%%%%%%%%%%%%%%%%%%%%%%%%%%%%%%%%%%%%%%%%%

\begin{document}

\title{A hypercyclicity criterion for non-metrizable topological vector spaces}

\author{ Alfred Peris\footnote{e-mail: aperis@mat.upv.es}}

\date{ }

\maketitle

\vspace{5mm}

\begin{center}
 \textbf{Dedicated to the memory of Professor Pawe{\l} Doma\'nski}
\end{center}

\vspace{5mm}

\begin{abstract}
We provide a sufficient condition for an operator $T$ on a non-metrizable and sequentially separable topological vector space $X$ to be sequentially hypercyclic. This condition is applied to some particular examples, namely, a composition operator on the space of real analytic functions on $]0,1[$, which solves two problems of Bonet and Doma\'nski \cite{bd12},  and the ``snake shift'' constructed in \cite{bfpw} on direct sums of sequence spaces. The two examples have in common that they do not admit a densely embedded  F-space $Y$ for which the operator restricted to $Y$ is continuous and hypercyclic, i.e., the hypercyclicity of these operators cannot be a consequence of the comparison principle with hypercyclic operators on F-spaces.
%\begin{keywords} %specification property; operator semigroup; chaotic operators %\end{keywords} %\begin{classcode} %47A16; 47B37. %\end{classcode}
\end{abstract}

%%%%%%%%%%%%%%%%%%%%%%%%%%%%%%%%%%%%%%%%%%%%%%%%%%%%%%%%%%%%%%%%%%%%%%%%% %%%%%%%%%%%%%%%%%%%%%%%%%%%%%%%%%%%%%%%%%%%%%%%%%%%%%%%%%%%%%%%%%%%%%%%%%

The study of the dynamics of linear operators has experienced a great development in recent years, with two monographs \cite{bayart_matheron2009dynamics} and \cite{grosse-erdmann_peris2011linear}, and many research papers. Usually the interest is in the dynamics of (continuous and linear) operators $T\in L(X)$ defined on separable Fréchet spaces $X$. Metrizability and completeness of the space offers the possibility to apply Baire category arguments, which are very useful in this context. A few articles concentrate on the dynamics of operators on non-metrizable topological vector spaces (see, e.g., \cite{bd12,bfpw,shk}).

We recall that an operator $T\in L(X)$ on a topological vector space $X$ is \emph{hypercyclic} if there are $x\in X$ whose orbit $\orb (x,T):=\{ x,Tx,T^2x,\dots \}$ is dense in $X$. We will say that $T$ is \emph{sequentially hypercyclic} if there is $x\in X$ such that, for each $y\in X$, there exists an increasing sequence of integers $(n_k)_k$ such that $\lim_k T^{n_k}x=y$. Also, to avoid confusion with more general concepts, we say that $X$ is \emph{sequentially separable} if there exists a countable set $A\subset X$ such that any $z\in X$ is the limit of a sequence in $A$.

In many cases one obtains (sequential) hypercyclicity of an operator $T\in L(X)$ for a non-metrizable $X$ by finding a Fréchet space $Y$, a hypercyclic operator $S$ on $Y$, and a continuous map $\Psi :Y\to X$ with dense range such that $T\circ \Psi=\Psi\circ S$. This is the so-called \emph{comparison principle}. An exception to this procedure are the hypercyclic operators on non-metrizable topological vector spaces obtained in \cite{bfpw} and \cite{shk}, where the hypercyclic vectors are constructed directly. We will obtain criteria under which operators on general topological vector spaces are sequentially hypercyclic.

\section{Criteria for sequential hypercyclicity}

In this section we will provide useful sufficient conditions for sequential hypercyclicity of operators on (non-metrizable) topological vector spaces.

\begin{proposition} Let $X$ be a sequentially separable topological vector space and $T\in L(X)$ such that there exist a
sequentially dense set $X_0:=\{ x_n \ ; \ n\in \N\}\subset X$, a sequence of maps $S_n:X_0\rightarrow X$, $n\in\N$,
a subspace $Y\subset X$  with a finer topology $\tau$ such that $(Y,\tau)$ is an F-space for which we fix
a countable basis of balanced $0$-neighbourhoods $(V_n)_n$ with $V_n+V_n\subset V_{n-1}$, $n>1$,and an increasing sequence $(n_k)_k$ of natural numbers
($n_0:=0$) satisfying:
\begin{enumerate}
\item[(i)] $T^{n_k}S_{n_j}x_j\in V_{2k}$, $k>1$, $j=1,\dots ,k-1$,
\item[(ii)] $T^{n_k}S_{n_j}x_j\in V_{j}$, $k\geq 0$, $j>k$,
\item[(iii)]  $x_k-T^{n_k}S_{n_k}x_k\in V_k$, $k\in \N$.
\end{enumerate}
Then $T$ is sequentially hypercyclic.
\end{proposition}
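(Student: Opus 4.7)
My plan is to imitate the standard Hypercyclicity Criterion construction, with the auxiliary $F$-space $(Y,\tau)$ replacing the Fréchet metric that makes the classical estimates go through. I would take as candidate hypercyclic vector
\[
z := \sum_{k=1}^{\infty} S_{n_k} x_k,
\]
and first verify convergence. Taking $k=0$ in (ii), so that $n_0=0$ and $T^{n_0}=I$, gives $S_{n_j}x_j \in V_j$ for every $j$. By induction on the relation $V_n+V_n \subset V_{n-1}$ one obtains the telescoping inclusion $V_{M+1}+V_{M+2}+\cdots+V_N \subset V_M$ for all $N>M\geq 0$. Therefore the partial sums $z^{(N)} := \sum_{k=1}^N S_{n_k}x_k$ are $\tau$-Cauchy, and completeness of $(Y,\tau)$ produces $z \in Y$ with $z^{(N)} \to z$ in $\tau$, hence also in $X$ since $\tau$ refines the $X$-topology on $Y$.

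Next I would show that $T^{n_\ell}z$ is close to $x_\ell$ for large $\ell$. Using continuity of $T^{n_\ell}$ on $X$, I split
\[
T^{n_\ell}z - x_\ell = A_\ell + B_\ell + C_\ell,
\]
where $A_\ell := \sum_{k=1}^{\ell-1} T^{n_\ell} S_{n_k} x_k$, $B_\ell := T^{n_\ell} S_{n_\ell} x_\ell - x_\ell$, and $C_\ell := \sum_{k=\ell+1}^{\infty} T^{n_\ell} S_{n_k} x_k$. Hypothesis (i) places each summand of $A_\ell$ in $V_{2\ell}$, and grouping dyadically via $V_n+V_n \subset V_{n-1}$ yields $A_\ell \in V_{2\ell - \lceil \log_2(\ell-1)\rceil} \subset V_\ell$ for $\ell$ large enough. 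Hypothesis (iii) and balancedness of $V_\ell$ give $B_\ell \in V_\ell$. For $C_\ell$, hypothesis (ii) and the telescoping inclusion show the partial sums lie in $V_\ell$ and are $\tau$-Cauchy, hence converge in $(Y,\tau)$ to the same vector they converge to in $X$; this common limit lies in $\overline{V_\ell}^\tau \subset V_\ell + V_\ell \subset V_{\ell-1}$ by the standard fact $\overline{V} \subset V + V$. Combining, $T^{n_\ell}z - x_\ell \in V_\ell + V_\ell + V_{\ell-1} \subset V_{\ell-2}$ for all sufficiently large $\ell$.

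Finally, because $(V_n)_n$ is a $0$-neighbourhood base in $\tau$ and $\tau$ refines the $X$-topology on $Y$, every $X$-neighbourhood of $0$ eventually contains $V_n$, so $T^{n_\ell}z - x_\ell \to 0$ in $X$. Given any $y \in X$, sequential density of $X_0$ lets me choose a strictly increasing sequence $(\ell_m)$ with $x_{\ell_m} \to y$ in $X$ (after, if necessary, re-enumerating $X_0$ so that every element appears infinitely often, which does not affect (i)--(iii) up to relabelling of the $S_n$ and $(n_k)$), and then $T^{n_{\ell_m}}z = x_{\ell_m} + (T^{n_{\ell_m}}z - x_{\ell_m}) \to y$. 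The delicate step, and where the $F$-space structure is essential, is controlling $C_\ell$: its $X$-limit cannot be bounded directly in the non-metrizable space $X$, and passing through the $\tau$-limit together with completeness of $(Y,\tau)$ and $\overline{V_\ell}^\tau \subset V_{\ell-1}$ is what circumvents this.
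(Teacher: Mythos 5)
Your proof is correct and follows essentially the same route as the paper's: the same candidate vector $\sum_{k} S_{n_k}x_k$ (convergent in the F-space $(Y,\tau)$ via condition (ii) with $k=0$), the same three-term decomposition of $T^{n_\ell}z-x_\ell$ into head, middle and tail, and the same resulting membership in $V_{\ell-2}$. You merely make explicit the bookkeeping the paper leaves implicit (the dyadic grouping of the $\ell-1$ terms in $V_{2\ell}$, the inclusion $\overline{V_\ell}^{\tau}\subset V_{\ell-1}$ for the tail's limit, and the passage from the estimate to an arbitrary target $y\in X$), so no further comparison is needed.
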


\begin{proof}  Let
$$
x:=\sum_{j=1}^\infty S_{n_j}x_j ,
$$
which belongs to $Y$ by (ii) for $k=0$, since $Y$ is an F-space. Conditions (i), (ii) and (iii) yield
that
$$
x_k-T^{n_k}x=-\left(T^{n_k}\left( \sum_{j=1}^{k-1} S_{n_j}x_j\right)\right)+\left(x_k-T^{n_k}S_{n_k}x_k\right) -
\left(T^{n_k}\left( \sum_{j=k+1}^{\infty} S_{n_j}x_j\right)\right) \in V_{k-2},
$$
for all $k\geq 2$, and we conclude that $T$ is sequentially hypercyclic.
\end{proof}

Actually, to apply this criterion in some particular examples, we will use other conditions which are stronger, but easy to verify.

\begin{definition}
We say that a sequence $(x_j)_j$ is \emph{eventually contained} in a set $A$ (denoted by $(x_j)_j \subset_{ec} A$) if there is
an integer $j_0$ such that $x_j\in A$ for $j\geq j_0$.
\end{definition}

\begin{corollary}\label{mcor}    Let $X$ be a sequentially separable topological vector space and $T\in L(X)$ such that there exist a
sequentially dense set $X_0:=\{ x_n \ ; \ n\in \N\}\subset X$, a sequence of maps $S_n:X_0\rightarrow X$, $n\in\N$,
a subspace $Y\subset X$  with a finer topology $\tau$ such that $(Y,\tau)$ is an F-space, and an increasing sequence $(n_k)_k$ of natural numbers ($n_0:=0$) satisfying:
\begin{enumerate}
\item[(i)']  $(T^{n_k}S_{n_j}x)_k\subset_{ec}Y$ and converges to $0$ in $(Y,\tau )$ for each $x\in X_0$, and for all $j\in \N$,
\item[(ii)'] $(T^{n_j}S_{n_k}x)_k\subset_{ec}Y$ and converges to $0$ in $(Y,\tau )$ for each $x\in X_0$, and for all $j\geq 0$,
\item[(iii)'] $(x-T^{n_k}S_{n_k}x)_k\subset_{ec}Y$ and converges to $0$ in $(Y,\tau )$ for each $x\in X_0$.
\end{enumerate}
Then $T$ is sequentially hypercyclic.
\end{corollary}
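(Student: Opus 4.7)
The plan is to deduce the corollary from the preceding proposition by extracting a suitable subsequence of $(n_k)_k$. Since $(Y,\tau)$ is an F-space, I first fix once and for all a countable balanced $0$-neighbourhood basis $(V_n)_n$ in $(Y,\tau)$ satisfying $V_n+V_n\subset V_{n-1}$ for $n>1$, which exists by the standard thinning procedure. The goal is then to produce indices $k_0:=0<k_1<k_2<\cdots$ such that, setting $m_l:=n_{k_l}$, the three conditions (i), (ii), (iii) of the proposition hold with $m_l$ in the role of $n_l$ and with the enumeration $X_0=\{x_l\}$ kept as given. The proposition will then yield sequential hypercyclicity at once.

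The extraction proceeds by induction on $l$. Assuming $k_0<\cdots<k_{l-1}$ have been selected, the choice of $k_l>k_{l-1}$ must fulfil the following finite list of requirements, together with the side requirement that every listed vector actually lie in $Y$: (a) $T^{n_{k_l}}S_{n_{k_j}}x_j\in V_{2l}$ for $1\leq j<l$, coming from condition (i) at stage $k=l$; (b) $T^{n_{k_i}}S_{n_{k_l}}x_l\in V_l$ for $0\leq i<l$, coming from condition (ii) in the instances $k=i$, $j=l$; and (c) $x_l-T^{n_{k_l}}S_{n_{k_l}}x_l\in V_l$, coming from condition (iii) at stage $l$. Hypothesis (i)' applied with $x=x_j$ and with the second index fixed at $n_{k_j}$ handles (a); hypothesis (ii)' applied with $x=x_l$ and with the first index fixed at $n_{k_i}$ handles (b); hypothesis (iii)' applied with $x=x_l$ handles (c). Each of these is a $\tau$-convergence-to-zero statement whose terms eventually lie in $Y$, so every requirement is met as soon as $k_l$ is taken sufficiently large, and since the list is finite such a $k_l$ exists.

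Once the subsequence $(m_l)_l=(n_{k_l})_l$ is assembled, conditions (i)--(iii) of the proposition hold by construction; in particular, the cases of (ii) with $k=l$ and $j>l$ that are not addressed at stage $l$ are exactly the ones picked up at the future stage $l'=j$ as part of the family (b). Note that the boundary case $k=0$ of (ii) falls into the $i=0$ instance of (b) thanks to $n_0=0$. The proposition then applies and gives sequential hypercyclicity of $T$. The only real obstacle in the argument is the index bookkeeping: one has to see that hypotheses (i)', (ii)', (iii)' supply convergence statements flexible enough to cover every finite requirement at every stage of the extraction, and the correspondence described above is what makes this work.
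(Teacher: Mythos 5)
Your argument is correct and is exactly the intended derivation: the paper states Corollary~\ref{mcor} without proof as a direct consequence of the Proposition, the point being precisely that the convergence hypotheses (i)'--(iii)' allow one to extract a subsequence of $(n_k)_k$ satisfying the finitely-many-at-a-time membership conditions (i)--(iii). Your index bookkeeping (handling the $j>k$ instances of (ii) at the later stage $l'=j$, and using $n_0=0$ for the $k=0$ case) is accurate, so nothing is missing.
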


\section{Composition operators on the space of real analytic functions and shifts on direct sums}

In this section we will apply the previous criterion to a composition operator on the space of real analytic functions
$\mathscr{A}(]0,1[)$, solving two
questions of Bonet and Doma\'nski in \cite{bd12}, and to the ``snake shift'' constructed in \cite{bfpw} on countable
direct sums of sequence spaces.

We first recall some basic definitions on the spaces of real analytic functions and composition operators between them. Given an open subset $\Omega\subset \R^d$, we denote by $\mathscr{A}(\Omega)$ the space of real analytic functions defined on $\Omega$. We recall that every $f\in \mathscr{A}(\Omega)$ can be extended holomorphically to a complex neighbourhood $U\subset \C^d$ of $\Omega$, i.e., we can consider $f\in\mathcal{H}(U)$ for some $\Omega\subset U\subset \C^d$ open set. The space $\mathcal{H}(U)$ is endowed with its natural (Fréchet) topology of uniform convergence on compact subsets. Given a compact set $K\subset \C^d$, the space $\mathcal{H}(K)$ of holomorphic germs on $K$ with its natural locally convex topology is
\[
\mathcal{H}(K)=\ind_{n\in\N} \mathcal{H}(U_n),
\]
where $(U_n)_n$ is a basis of $\C^d$-neighbourhoods of $K$. Thus, the space $\mathscr{A}(\Omega)$ has a description as a countable projective limit
\[
\mathscr{A}(\Omega)=\proj_{j\in\N} \mathcal{H}(K_j),
\]
where $(K_j)_j$ is a fundamental sequence of compact subsets of $\Omega$.

Several basic facts about spaces of real analytic functions were studied by Doma\'nski and Vogt \cite{dv}, including the surprising result that this natural space has no basis.

Given a real analytic map $\varphi :\Omega\to \Omega$, the composition operator $C_\varphi :\mathscr{A}(\Omega )\to \mathscr{A}(\Omega )$, $f\mapsto f\circ \varphi$, is continuous. The dynamics of composition operators on spaces of real analytic functions was thoroughly studied in \cite{bd12}. The dynamics of other natural operators, namely weighted backward shifts, on spaces of real analytic functions was recently studied in \cite{dk18}.

Bonet and Doma\'nski \cite{bd12} asked whether the composition operator $C_\varphi$, $\varphi(z):=z^2$, is (sequentially) hypercyclic on $\mathscr{A}(]0,1[)$. They also asked if every sequentially hypercyclic operator $C_\varphi :\mathscr{A}(\Omega)\to \mathscr{A}(\Omega)$ is so that there exists a complex neighbourhood $U$ of $\Omega$ such that $\varphi$ extends holomorphically to $U$, $\varphi (U)\subset U$ and $C_\varphi :\mathcal{H}(U)\to \mathcal{H}(U)$ is hypercyclic. The following example provides a positive answer to the first question, and a negative answer to the second one.

\begin{example} Let $\varphi (z):=z^2$, $X:=\mathscr{ A}(]0,1[)$, and $T:=C_\varphi$. Let $p_n(z)$
be a dense sequence of polynomials. We set $x_n(z)=z(1-z)p_n(z)$, $n\in \N$, which forms
a sequentially dense set in $X$. Let $X_0:=\{ x_n \ ; \ n\in \N\}$ and $Y:=\mathcal{ H}(U)$, for $U$ the open disk centered at $1/2$
of radius $1/2$. Let $\log z$ be a branch of the logarithm defined on $\C \setminus ]-\infty ,0]$.
We set $S_n=C_{\gamma_n}$, where $\gamma_n(z)=\exp (\frac{1}{2^n} \log z)$, $n\in \N$.
It is clear that the conditions of Corollary~\ref{mcor} are satisfied for the sequence of all natural numbers,
and $T$ is sequentially hypercyclic. Indeed,
$T^kS_kf=f$ on $]0,1[$ for all $f\in \mathcal{ H}(U)$ (considered as a subspace of $\mathscr{ A}(]0,1[)$). Since $f\in \mathcal{H}(U)$, we even have $T^kS_kf=f\in \mathcal{H}(U)$ for each $k\in \N$, so that (iii)' is trivially satisfied.
Given a compact set $K\subset U$, $\varphi^n\rightarrow 0$ and $\gamma_n\rightarrow 1$ uniformly on $K$.
Therefore,
$$
\lim_{n\rightarrow \infty}(T^nS_jx_m)(z)=\lim_{n\rightarrow \infty}z^{2^{n-j}}(1-z^{2^{n-j}})p_m(z^{2^{n-j}})= 0,
\ \forall j,m\in \N ,
$$
and
$$
\lim_{n\rightarrow \infty}(T^jS_nx_m)(z)=\lim_{n\rightarrow \infty}\gamma_{n-j}(z)(1-\gamma_{n-j}(z))p_m(\gamma_{n-j}(z))= 0,
\ \forall j\geq 0, \forall m\in \N ,
$$
uniformly on $K$. That is, (i)' and (ii)' are also satisfied.
\end{example}

The idea of the previous example can be extended to certain composition operators $C_\varphi :\mathscr{A}(I)\to\mathscr{A}(I)$ for bounded open intervals $I$ in $\R$. These results will appear elsewhere. We should also note the following alternative argument provided by José Bonet: A classical result of Belitskii and Lyubich \cite{bl99} (see also \cite{bd15}) shows that any real analytic diffeomorphism without fixed points $\varphi:\R\to\R$ is real analytic conjugate to the shift $x\mapsto x + 1$. As a consequence, for any real analytic diffeomorphism without fixed points $\varphi: I \to I$ on an open interval $I\subset \R$, the composition operator $C_\varphi :\mathscr{A}(I)\to\mathscr{A}(I)$  is sequentially hypercyclic.

%We recall that a map $\varphi :\Omega\to\Omega$ \emph{runs away} on $\Omega$ if, for every compact subset $K\subset \Omega$, there is $n\in\N$ such that $\varphi^n(K)\cap K=\emptyset$. It was observed in \cite{bd12} that $\varphi :I\to I$ on an open interval $I\subset\R$ runs away if, and only if, it has no fixed point on $I$. Transitivity of composition operators $C_\varphi$ on $\mathscr{A}(\Omega )$, where $\varphi :\Omega\to\Omega$ is an analytic map, was characterized in \cite{bd12} as $\varphi $ is injective, $\varphi'$ is never singular on $\Omega$, and $\varphi$ runs away on $\Omega$. Let $I=]a,b[\subset \R$ be a bounded interval and let $\varphi : I\to I$ be a bijective analytic map without fixed points on $I$ (thus, it runs away on $I$) such that its inverse $\varphi^{-1}$ is also analytic on $I$ (these conditions are satisfied for the previous example $\varphi (z)=z^2$ on $I=]0,1[$). We follow the previous argument by setting $x_n(z)=(z-a)(z-b)p_n(z)$, $n\in \N$, where $(p_n)_n$ is a sequence of polynomials that forms a dense subset of $\mathscr{A}(I)$, to obtain in the same manner that $C_\varphi$ is sequentially hypercyclic on $\mathscr{A}(I)$, since $\varphi^n(K)\to a_1$ and $\varphi^{-n}(K)\to a_2$, for $\{a_1,a_2\}=\{a,b\}$ and for any compact subset $K\subset I$.

\begin{example}
The snake shift $T$ constructed in \cite{bfpw} was defined on the countable direct sum $X:=\oplus_{i\in\N} Y$ of a Fréchet sequence space $Y$, for the cases $Y=\ell^p$, $1\leq p<\infty$, $Y=c_0$, $Y=s$, the space of rapidly decreasing sequences
\[
s:=\{ x=(x_n)_n\in \C^\N \ ; \ \norm{x}_k:=\sum_{n\in\N} |x_n|n^k<\infty \mbox{ for all } k\in\N\}.
\]
To fix notation, $(e_{i,j})_j$ represents the canonical unit vectors on the $i$-th summand, $Te_{1,1}=0$, $Te_{i,j}=\lambda e_{f(i,j)}$, $(i,j)\neq (1,1)$, where the constant $\lambda >1$ and $f:\N \times \N\setminus \{(1,1)\}\to \N\times \N$ is a suitable bijection. Once a certain sequence of vectors with finite support $(x_j)_j$ in $X$ was fixed so that it is sequentially dense in $X$, the constructed hypercyclic vector had the form
\[
x=\sum_{k\in\N} \sum_{j=m_k}^{n_k} \frac{1}{\lambda^{l_k}} \alpha_j e_{1,j} \in Y,
\]
where $T^{l_k}(\sum_{j=m_k}^{n_k} \frac{1}{\lambda^{l_k}} \alpha_j e_{1,j})=x_k$ and $|\alpha_j|\leq k$, $m_k\leq j\leq n_k$, $k\in\N$, for suitable increasing sequences $(m_k)_k$, $(n_k)_k$ and $(l_k)_k$. In the case $Y=s$, the sequence $(n_k)_k$ was required to be polynomially bounded (actually, $n_k\leq 3k^2$, $k\in\N$).

Defining $X_0=\{ x_k \ ; \ k\in\N\}$ and $Se_{i,j}=\lambda^{-1}e_{f^{-1}(i,j)}$, $(i,j)\in\N\times\N$, $S_n=S^n$, $n\in \N$, one has
\[
S_{l_k}x_k=\sum_{j=m_k}^{n_k} \frac{1}{\lambda^{l_k}} \alpha_j e_{1,j}, \ \ k\in\N, \mbox{ that yields condition (iii)' in Corollary \ref{mcor}},
\]
\[
T^{l_k}S_{l_j}x_i =0, \mbox{ if } k>j+ i,
\]
that is, condition (i)' in Corollary \ref{mcor}, and finally
\[
T^{l_j}S_{l_k}x_i \in Y, \mbox{ if } k>j+ i, \ \mbox{ and } \ \lim_k T^{l_j}S_{l_k}x_i =\lim_k \frac{1}{\lambda^{l_k-l_j}} \sum_{r=m_k-l_j}^{m_k-l_j+n_i-m_i} \beta_r e_{1,r}=0 \mbox{ in } Y,
\]
for certain $\beta_r$ with $|\beta_r|\leq i$, for $m_k-l_j\leq r\leq m_k-l_j+n_i-m_i$, $k>i+j$, which gives condition (ii)' in Corollary \ref{mcor}.
\end{example}

We want to point out that Shkarin constructed in \cite{shk} hypercyclic operators on locally convex direct sums of sequences $(X_n)_n$ of separable Fréchet spaces for which infinitely many of them are infinite dimensional, and he characterized inductive limits of sequences of separable Banach spaces which support a hypercyclic operator.

%\begin{acknowledgement}
\section*{Acknowledgements}
The author thanks José Bonet and Pawe{\l} Doma\'nski for interesting conversations on the results of the paper, and the referee for valuable comments that produced an improved presentation. He also acknowledges  the support of MINECO,  Project MTM2016-75963-P, and Generalitat Valenciana, Project  PROMETEO/2017/102.
%\end{acknowledgement}

%%%%%%%%%%%%%%%%%%%%%%%%%%%%%%%%%%%%%%%%%%%%%%%%%%%%%%%%%%%%%%%%%%%%%%%%% %%%%%%%%%%%%%%%%%%%%%%%%%%%%%%%%%%%%%%%%%%%%%%%%%%%%%%%%%%%%%%%%%%%%%%%%%

%%%%%%%%%%%%%%%%%%%%%%%%%%%%%%%%%%%%%%%%%%%%%%%%%%%%%%%%%%%%%%%%%%%%%%%%%%%%%%%%%%%%%%%%%%%%%%%%

\vspace{5mm}

 Institut Universitari de Matemàtica Pura i Aplicada,

 Universitat Polit\`ecnica de Val\`encia,

 Edifici 8E, Acces F, 4a planta,

 46022 Val\`encia, Spain.

\end{document}